  \DeclareRobustCommand{\[}{\begin{equation}}%
  \DeclareRobustCommand{\]}{\end{equation}}%
\declaretheorem[numbered=no]{Theorem}
\declaretheorem[numbered=no]{Proposition}
\declaretheorem[numbered=no]{Corollary}
\declaretheorem[numbered=no]{Question}
\DeclareMathOperator{\Ext}{Ext}
\DeclareMathOperator{\Tor}{Tor}
\DeclareMathOperator{\pdim}{pdim}
\newcommand\place{\mathord-}
\newcommand\ZZ{\mathbb{Z}}
\newcommand\NN{\mathbb{N}}
\newcommand\id{\mathrm{id}}
\title{Untwisting algebras with van den Bergh duality\\into Calabi-Yau
algebras}
\author{Mariano Suárez-Alvarez\thanks{CONICET. Departamento de Matemática. Facultad
de Ciencias Exactas y Naturales. Universidad de Buenos Aires. Email:
\texttt{mariano@dm.uba.ar}}}
\date{October 31, 2013}
\begin{document}

\maketitle

Recently, Jake Goodman and Ulrich Kr\"ahmer \cite{GK} have shown that a
twisted Calabi-Yau algebra $A$ with modular automorphism~$\sigma$ and
dimension~$d$ can be ``untwisted,'' in the sense that the Ore extensions
$A[X;\sigma]$ and $A[X^{\pm1};\sigma]$ are Calabi-Yau algebras of
dimension~$d+1$. The purpose of this note is to record the observation that
this result holds in greater generality:

\begin{Theorem}
If $A$ is an algebra that satisfies the conditions for van den Bergh
duality of dimension~$d$ and $U=\Ext_{A^e}^d(A,A\otimes A)$ is its
dualizing bimodule, then the tensor algebra
$A[U]=\bigoplus_{n\in\NN_0}U^{\otimes_An}$ is a Calabi-Yau algebra of
dimension~$d+1$.
\end{Theorem}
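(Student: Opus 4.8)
The plan is to write $B=A[U]=\bigoplus_{n\ge0}U^{\otimes_An}$ and to build everything on one elementary observation: an invertible bimodule is finitely generated projective both as a left and as a right $A$-module, hence so is every $U^{\otimes_An}$, so $B$ is projective — in particular flat — on each side over $A$, and each multiplication map $U^{\otimes_Am}\otimes_AU^{\otimes_An}\to U^{\otimes_A(m+n)}$ is an isomorphism. First I would record the standard fundamental exact sequence of $B$-bimodules
\[
0\longrightarrow B\otimes_AU\otimes_AB\xrightarrow{\ \mu\ }B\otimes_AB\xrightarrow{\ m\ }B\longrightarrow0,
\qquad\mu(b\otimes u\otimes b')=bu\otimes b'-b\otimes ub',
\]
$m$ being the multiplication, the injectivity of $\mu$ being a leading-term computation in the $\NN_0$-grading. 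Since a van den Bergh algebra is homologically smooth, fix a finite resolution $P_\bullet\to A$ by finitely generated projective $A$-bimodules. Then $U\otimes_A P_\bullet\to U$ is a finite resolution of $U$ by finitely generated projective $A$-bimodules (because $U\otimes_A(A\otimes A)$ is a direct summand of a finite power of $A\otimes A$, $U$ being projective as a left module), and the exact functor $B\otimes_A(\place)\otimes_AB$ carries it, and $P_\bullet\to A$, to finite resolutions of $B\otimes_AU\otimes_AB$ and $B\otimes_AB$ by finitely generated projective $B$-bimodules (it sends $A\otimes A$ to $B\otimes B=B^e$). Forming the mapping cone of a lift of $\mu$ to these resolutions exhibits $B$ as perfect over $B^e$ with $\pdim_{B^e}B\le d+1$; in particular $B$ is homologically smooth.

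The next step is to compute $\operatorname{RHom}_{B^e}(\place,B^e)$ on the two middle terms of the fundamental sequence by base change to $A$. For $X$ perfect over $A^e$ one has $B\otimes_AX\otimes_AB\cong B^e\otimes^{\mathbf L}_{A^e}X$ (extension of scalars of bimodules, underived here since $B$ is flat over $A$ on both sides), so adjunction gives $\operatorname{RHom}_{B^e}(B\otimes_AX\otimes_AB,B^e)\cong\operatorname{RHom}_{A^e}(X,\operatorname{Res}B^e)$; identifying $\operatorname{Res}B^e$ with the $A$-bimodule ${}_AB\otimes B_A$ (with its residual right $B^e$-action) and using perfectness of $X$ to move $\operatorname{RHom}_{A^e}(X,A^e)$ out of the coefficients, this becomes
\[
\operatorname{RHom}_{B^e}(B\otimes_AX\otimes_AB,B^e)\;\cong\;B\otimes_A\operatorname{RHom}_{A^e}(X,A^e)\otimes_AB .
\]
For $X=A$, van den Bergh duality $\operatorname{RHom}_{A^e}(A,A^e)\cong U[-d]$ yields $\operatorname{RHom}_{B^e}(B\otimes_AB,B^e)\cong(B\otimes_AU\otimes_AB)[-d]$. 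For $X=U$ one must first compute $\operatorname{RHom}_{A^e}(U,A^e)$: since $U$ is invertible, $U\otimes_A(\place)$ and $U^{-1}\otimes_A(\place)$ are mutually inverse exact autoequivalences of the category of $A$-bimodules, whence $\operatorname{RHom}_{A^e}(U,A^e)\cong\operatorname{RHom}_{A^e}(A,U^{-1}\otimes_AA^e)$, and since $U^{-1}\otimes_AA^e$ is again finitely generated projective over $A^e$ ($U^{-1}$ being projective as a left module) this equals $U[-d]\otimes_{A^e}(U^{-1}\otimes_AA^e)\cong(U\otimes_AU^{-1})[-d]\cong A[-d]$. Hence $\operatorname{RHom}_{B^e}(B\otimes_AU\otimes_AB,B^e)\cong(B\otimes_AB)[-d]$.

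Applying the contravariant functor $\operatorname{RHom}_{B^e}(\place,B^e)$ to the distinguished triangle $B\otimes_AU\otimes_AB\xrightarrow{\mu}B\otimes_AB\xrightarrow{m}B\xrightarrow{+1}$ of $B$-bimodules and inserting the two computations, one obtains a triangle
\[
\operatorname{RHom}_{B^e}(B,B^e)\longrightarrow(B\otimes_AU\otimes_AB)[-d]\xrightarrow{\ \mu^{\vee}\ }(B\otimes_AB)[-d]\xrightarrow{+1}
\]
so that $\operatorname{RHom}_{B^e}(B,B^e)\cong\operatorname{cone}(\mu^{\vee})[-d-1]$. It therefore suffices to prove that $\mu^{\vee}$, viewed as a morphism $B\otimes_AU\otimes_AB\to B\otimes_AB$, is isomorphic to $\mu$: then $\operatorname{cone}(\mu^{\vee})\cong\operatorname{cone}(\mu)=B$ and $\operatorname{RHom}_{B^e}(B,B^e)\cong B[-d-1]$, which is the assertion that $B$ is Calabi--Yau of dimension $d+1$ (this also sharpens the first step to $\pdim_{B^e}B=d+1$). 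Identifying $\mu^{\vee}$ with $\mu$ is the crux, and the step I expect to take real work: one must verify, on explicit projective resolutions, that dualizing $\mu=\mu_1-\mu_2$ — whose summands are the multiplication of the $U$-factor into the left, respectively the right, tensor factor — again returns the difference of a left- and a right-multiplication map, using the naturality of the isomorphism $\operatorname{RHom}_{A^e}(U\otimes_A\place,A^e)\cong\operatorname{RHom}_{A^e}(\place,A^e)\otimes_AU^{-1}$ together with the explicit shape of the base-change identifications; up to a unit scalar and automorphisms of the two endpoints this determines $\mu^{\vee}$ as $\mu$. A tidier alternative avoiding the abstract dualization is to assemble from $P_\bullet$ a single projective $B^e$-resolution of $B$ of length $d+1$ that is manifestly self-dual up to the twist by $U$ and the shift by $d+1$, and to read $\operatorname{RHom}_{B^e}(B,B^e)\cong B[-d-1]$ off it directly.
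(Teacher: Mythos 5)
Your strategy is genuinely different from the paper's and is, in outline, a viable one: you resolve $B$ by the fundamental exact sequence $0\to B\otimes_AU\otimes_AB\xrightarrow{\mu}B\otimes_AB\to B\to0$ of the tensor algebra, apply $\operatorname{RHom}_{B^e}(\place,B^e)$, and compare the resulting triangle with the original one. The smoothness argument and the two base-change computations $\operatorname{RHom}_{B^e}(B\otimes_AB,B^e)\cong(B\otimes_AU\otimes_AB)[-d]$ and $\operatorname{RHom}_{B^e}(B\otimes_AU\otimes_AB,B^e)\cong(B\otimes_AB)[-d]$ are correct. But the proof is not complete: everything hinges on identifying $\mu^{\vee}$, and you explicitly defer that step. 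This is a genuine gap, not a routine verification. The justification you offer --- that ``up to a unit scalar and automorphisms of the two endpoints'' soft considerations pin down $\mu^{\vee}$ as $\mu$ --- is not true as stated: the space $\hom_{B^e}(B\otimes_AU\otimes_AB,B\otimes_AB)\cong\hom_{A^e}(U,B\otimes_AB)$ is large (besides the two summands of $\mu$ it contains all maps $U\to U^{\otimes_Am}\otimes_AU^{\otimes_An}$), and different choices give different cones --- the zero map, for instance, yields $(B\otimes_AU\otimes_AB)[1]\oplus(B\otimes_AB)$ rather than $B$. So one must actually compute $\mu^{\vee}$, or at least its cone, by tracking the adjunction and base-change identifications through explicit resolutions; that computation is the real content of the theorem and it is missing.

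It is worth seeing how the paper sidesteps exactly this difficulty. Instead of the two-sided fundamental sequence, it resolves the right $B^e$-module $A\otimes B$ by the two-term complex $K=(I\otimes B\hookrightarrow B\otimes B)$ with $I=\ker(B\to A)$, whose components are finitely generated projective as right $B^e$-modules and whose differential is simply an inclusion; then $P\otimes_{A^e}K$ is a projective bimodule resolution of $B$. Dualizing $K$ into $B\otimes B$ is a componentwise adjunction, the dualized differential is transparently restriction along $U\hookrightarrow B$, and van den Bergh duality for $A$ is invoked not to compute $\operatorname{RHom}_{A^e}(U,A^e)$ but directly with coefficients in the complex $\hom_{B^e}(K,B\otimes B)$; after the single twist by $U$ (using $U\otimes_AU^*\cong A$) the differential becomes the inclusion $I\otimes B\hookrightarrow B\otimes B$ again, with cokernel visibly $A\otimes B$. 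If you want to complete your own route, your closing sentence points the right way: assemble from $P$ one explicit length-$(d+1)$ projective $B^e$-resolution of $B$ and compute its dual by hand, rather than arguing abstractly about what $\mu^{\vee}$ must be.
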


If in this statement we suppose that $A$ is a twisted Calabi-Yau
algebra with modular automorphism~$\sigma$, then we recover the result of
Goodman and Kr\"ahmer, for in that case $U=A_\sigma$ is a twisted
$A$-bimodule and $A[U]$ is isomorphic to the Ore extension~$A[X;\sigma]$.
We refer to the papers of van den Bergh~\cite{vdB} and Ginzburg~\cite{G}
for the little informatiom about duality and Calabi-Yau algebras that we
need. We work over a fixed field, over which unadorned tensor products are
taken, or over an arbitrary commutative ring, provided we add the
hypothesis that $A$ be projective. All our complexes are cochain complexes,
we underline their components of degree zero and, for brevity, we say
that a complex of
$\Lambda$\nobreakdash-bimodules is \emph{good} if it is of finite length and its
components are finitely generated as bimodules.

\begin{proof}
Let us write, for simplicity, $B=A[U]$. The kernel~$I$ of the obvious
augmentation map $B\to A$ is finitely generated and projective as a
$B$-module both on the left and on the right; indeed, restriction along the
inclusion $U\hookrightarrow I$ gives an isomorphism of functors
$\hom_B(I,\place)\cong\hom_A(U,\place)$ of left or right $B$-modules, and
$U$ is finitely generated and projective as an $A$-module both on the left
and on the right. Let $K$ be the complex
  \[
  \xymatrix{
  I\otimes B \ar[r] 
        & \underline{B\otimes B}
  }\]
of $B^e$-bimodules, and let us make the convention that the left and right
actions of~$B^e$ are the outer and the inner ones, respectively; it is
clear that $K$ is a complex of projective and finitely generated right
$B^e$-modules, and that its homology is $H(K)=A\otimes B$ concentrated in
degree zero.

Let now $P$ be a good resolution of~$A$ by 
projective $A$-bimodules. The complex of right $B^e$-modules
$P\otimes_{A^e}K$ has finite length, and all its components are finitely
generated and projective. To compute its homology, we can use a spectral
sequence. Taking first homology with respect to the differential of~$P$ we
obtain ---because $K$ is a complex of projective left $A^e$-modules--- the complex
$A\otimes_{A^e}K$. This can be identified with
  \[
  \xymatrix{
  B\otimes_AI \ar[r]
    & \underline{B\otimes_AB}
  }\]
and the homology of this is~$B$ concentrated in degree
zero. We conclude in this way that $P\otimes_{A^e}K$ is a good resolution 
of~$B$ by projective $B$-bimodules.

We want to compute $\Ext_{B^e}(B,B\otimes B)$. We have
  \[
  \hom_{B^e}(P\otimes_{A^e}K,B\otimes B)
        \cong \hom_{A^e}(P,\hom_{B^e}(K,B\otimes B)) 
  \]
and, since $A$ satisfies van den Bergh duality of dimension~$d$
and dualizing module~$U$, this has the same homology as
  \[
  P\otimes_{A^e}\Bigl(U\otimes_A\hom_{B^e}(K,B\otimes B)\Bigr)[-d] .
                \label{eq:complex}
  \]
We use, as before, a spectral sequence to compute the homology of this
complex. As $U$ is finitely generated and projective as an $A$-module,
  \begin{align}
  \hom_{B^e}(I\otimes B,B\otimes B)
        & \cong \hom_{B}(I,B\otimes B)
          \cong \hom_A(U,B\otimes B) \\
        & \cong U^*\otimes_AB\otimes B,
  \end{align}
with $U^*=\hom_A(U,A)$. Now $U\otimes_AU^*\cong A$ as $A$-bimodules, so the
complex $U\otimes_A\hom_{B^e}(K,B\otimes B)$ can be identified with
  \[
  \xymatrix{
  \underline{U\otimes_AB\otimes B} \ar[r]
        & B\otimes B
  }\]
and its homology is $A\otimes B$ concentrated in degree one. It follows
that taking homology with respect to the differential induced by that
of~$K$ in the complex~\eqref{eq:complex}, we get $P\otimes_{A^e}(A\otimes
B)[-d-1]$, and the homology of this, in turn, is clearly $B$ concentrated in
degree~$d+1$. This proves the theorem.
\end{proof}

\begin{Corollary}
In the conditions of the theorem, the algebra
$C=\bigoplus_{n\in\ZZ}U^{\otimes_An}$ is also Calabi-Yau of
dimension~$d+1$.
\end{Corollary}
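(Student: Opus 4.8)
The plan is to realize $C$ as a flat localization of the algebra $B=A[U]$ of the theorem and to transport the Calabi-Yau property from $B$ to $C$ by flat base change along the inclusion $B\hookrightarrow C$. First I would record the structure of~$C$. Since $U$ is the dualizing bimodule of an algebra satisfying van den Bergh duality it is invertible, so the negative powers $U^{\otimes_An}=(U^*)^{\otimes_A(-n)}$ make sense, the coherent family of isomorphisms $U^{\otimes_Am}\otimes_AU^{\otimes_An}\cong U^{\otimes_A(m+n)}$ turns $C$ into an associative $\ZZ$-graded algebra with $C_n=U^{\otimes_An}$, with $C_0=A$, and with $C_mC_n=C_{m+n}$ for all $m,n$; the algebra $B$ is the subalgebra $\bigoplus_{n\ge0}C_n$ of~$C$.

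The substantive step, which I expect to be the main obstacle, is to show that $C$ is flat as a left and as a right $B$-module and that the multiplication $C\otimes_BC\to C$ is an isomorphism --- that is, that $B\hookrightarrow C$ is a flat epimorphism of rings. For each $k\ge0$ the subgroup $F_k=\bigoplus_{n\ge-k}C_n$ is a sub-$B$-bimodule of~$C$, we have $F_0=B$ and $\bigcup_kF_k=C$, and a shift of the grading identifies $F_k$ with $U^{\otimes_A(-k)}\otimes_AB$ as a right $B$-module; since $U^{\otimes_A(-k)}$ is finitely generated and projective over~$A$, each $F_k$ is so over~$B$, and hence $C=\varinjlim_kF_k$, being a filtered colimit of projective right $B$-modules, is flat, and symmetrically on the left. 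For the isomorphism $C\otimes_BC\cong C$ one computes $F_k\otimes_BC\cong U^{\otimes_A(-k)}\otimes_AC\cong C$ --- the last isomorphism precisely because $U$ is invertible and $C$ has a component in every degree --- and checks that under these identifications the transition maps become isomorphisms, whence $C\otimes_BC=\varinjlim_k(F_k\otimes_BC)\cong C$. This is the one place where invertibility of~$U$, as opposed to the mere one-sided projectivity used in the theorem, is essential.

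Granting this, let $Q$ be a good resolution of~$B$ by projective $B$-bimodules, which exists since $B$ is Calabi-Yau by the theorem. Because $C$ is flat over~$B$ on both sides, $C\otimes_BQ\otimes_BC$ is a finite complex of finitely generated projective $C$-bimodules whose homology is $C\otimes_BB\otimes_BC\cong C\otimes_BC\cong C$ concentrated in degree zero, so it is a good resolution of~$C$ and, in particular, $C$ is homologically smooth. To compute $\Ext_{C^e}(C,C\otimes C)$ from it, note that for every finitely generated projective $B$-bimodule~$P$ there is a natural isomorphism
  \[
  \hom_{C^e}(C\otimes_BP\otimes_BC,C\otimes C)\cong C\otimes_B\hom_{B^e}(P,B\otimes B)\otimes_BC,
  \]
immediate for $P=B\otimes B$ and additive in~$P$; applying it to~$Q$ and taking homology, which commutes with the exact functor $C\otimes_B\place\otimes_BC$, shows that $\Ext_{C^e}(C,C\otimes C)$ is $C\otimes_B\Ext_{B^e}(B,B\otimes B)\otimes_BC$. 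By the theorem $\Ext_{B^e}(B,B\otimes B)$ is $B$ concentrated in degree~$d+1$, so this is $C\otimes_BB\otimes_BC\cong C$ concentrated in degree~$d+1$; together with homological smoothness, this is exactly what it means for $C$ to be Calabi-Yau of dimension~$d+1$.
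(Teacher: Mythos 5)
Your proof is correct, and its first half --- exhibiting $C$ as the filtered colimit of the shifted submodules $\bigoplus_{n\geq n_0}U^{\otimes_An}$, each projective over~$B$ because $U$ is invertible, and checking that multiplication gives $C\otimes_BC\cong C$ --- is exactly the argument of the paper. Where you diverge is in the second half: the paper at that point simply invokes Farinati's localization theorem, which says that van den Bergh duality passes along flat epimorphisms of rings, whereas you prove the special case you need by hand, via the base-change isomorphism $\hom_{C^e}(C\otimes_BP\otimes_BC,C\otimes C)\cong C\otimes_B\hom_{B^e}(P,B\otimes B)\otimes_BC$ for finitely generated projective $B$-bimodules~$P$, checked on $P=B\otimes B$ and extended by additivity, together with exactness of $C\otimes_B\place\otimes_BC$. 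This inlined argument is sound (it is essentially the proof of the cited result in this situation) and has the virtue of making the corollary self-contained and of making visible exactly which properties of the extension $B\hookrightarrow C$ are used; the citation, on the other hand, buys brevity and places the corollary in the general context of localization of Hochschild (co)homology. Either way the mathematics is the same, so your write-up is a valid alternative to the paper's.
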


Notice that this makes sense because the $A$-bimodule~$U$ is invertible.

\begin{proof}
One can check at once that the multiplication in~$C$ induces an isomorphism
$C\otimes_BC\to B$. On the other hand, $C$ is flat as a left and as a right
$B$-module: it is the colimit of the chain of its $B$-submodules of the
form $\bigoplus_{n\geq n_0}U^{\otimes_An}$ with $n_0\in\ZZ$, each of which
is projective, being isomorphic to $U^{\otimes_An_0}\otimes_AB$
or~$B\otimes_AU^{\otimes_An_0}$. The corollary follows then from the
theorem and the localization result \cite{F}*{Theorem~6} of Farinati.
\end{proof}

The theorem above and its corollary are unsatisfying in that they untwist
the algebra~$A$, which is of dimension~$d$, into an algebra of
dimension~$d+1$. In~general, this seems to be as much as one can hope for.
There is a case in which we can do better, though, and we end this note
explaining this.

We put ourselves in the situation of the theorem again and suppose
additionally that $U$ is a bimodule of \emph{finite order}, so that there
exist $\ell\in\NN$ and an isomorphism $\phi:U^{\otimes_A\ell}\to A$ of
$A$-bimodules, and that the isomorphism~$\phi$ is \emph{associative}, in
the sense that the diagram
  \[
  \xymatrix@C+10pt{
  U^{\otimes_A(\ell+1)} \ar[r]^-{\phi\otimes\id_U} \ar[d]_-{\id_U\otimes\phi}
        & A\otimes_AU \ar[d]
        \\
  U\otimes_AA \ar[r]
        & U
  }
  \]
in which the unlabelled arrows are canonical isomorphisms commutes. We 
consider the $A$-submodule
  \(
  R = \{x-\phi(x):x\in U^{\otimes_A\ell}\} \subseteq B
  \).
Since the isomorphism~$\phi$ is associative, the left ideal $J=BR$ coincides
with the right ideal $RB$ and it is a bilateral ideal: we can therefore
consider the algebra $D=B/J$. There is clearly a direct sum decomposition
$D\cong\bigoplus_{i=0}^{\ell-1}U^{\otimes_Ai}$ as $A$-bimodules, and this
construction should remind us of the classical construction of cyclic
algebras over a field.

\begin{Proposition}
In the situation of the theorem, if the dualizing bimodule $U$ is of finite
order and admits an associative isomorphism $\phi:U^{\otimes_A\ell}\to A$
and $\ell$ is invertible in~$A$, then the algebra $D=A[U]/J$ with $J$
generated by $R$ as above is a Calabi-Yau algebra of dimension~$d$.
\end{Proposition}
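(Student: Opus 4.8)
The plan is to prove that $D$ is \emph{separable} over $A$ --- that the multiplication $D\otimes_A D\to D$ admits a splitting as a morphism of $D$-bimodules --- and then to transfer van den Bergh duality from $A$ to $D$ along the finite, projective, separable extension $A\subseteq D$. The hypothesis that $\ell$ be invertible in $A$ enters exactly here, as it does in the classical construction of cyclic algebras, and it is what makes $D$ --- and not merely $A$ --- homologically smooth.

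The key point is that $\eta:=\phi^{-1}(1)\in U^{\otimes_A\ell}\subseteq A[U]$ is a \emph{central} element of $B=A[U]$. It commutes with $A$ because $\phi$ is a morphism of $A$-bimodules, so that $a\eta=\phi^{-1}(a)=\eta a$; and it commutes with $U$ because the associativity of $\phi$ says precisely that the two maps $U^{\otimes_A(\ell+1)}\to U$ obtained by applying $\phi$ to the first, respectively to the last, $\ell$ tensorands and then multiplying agree --- and this common map, being an isomorphism, sends both $\eta\otimes_A u$ and $u\otimes_A\eta$ to $u$, whence $\eta u=u\eta$. In particular $\eta$ is central in $D$, where it equals $1$. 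Writing $e_i\in U^{\otimes_Ai}\otimes_AU^{\otimes_A(\ell-i)}\subseteq D\otimes_AD$ for the element that corresponds to $\eta$ under the reassociation isomorphism $U^{\otimes_A\ell}\cong U^{\otimes_Ai}\otimes_AU^{\otimes_A(\ell-i)}$ (so that $e_0=1\otimes1$), I would take
  \[
  e=\frac1\ell\sum_{i=0}^{\ell-1}e_i ,
  \]
which is legitimate since $\ell$ is invertible in $A$. One checks directly that the multiplication carries $e$ to $1$, and the centrality of $\eta$ gives $de=ed$ for all $d\in D$; hence $e$ is a separability idempotent for $D$ over $A$. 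Since each $U^{\otimes_Ai}$ is an invertible $A$-bimodule, hence finitely generated and projective on each side over $A$, so is $D=\bigoplus_{i=0}^{\ell-1}U^{\otimes_Ai}$.

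Homological smoothness of $D$ follows at once: if $P$ is a good resolution of $A$ by projective $A$-bimodules, then $D\otimes_AP\otimes_AD\;(\cong P\otimes_{A^e}D^e)$ is a good resolution of $D\otimes_AD$ by projective $D$-bimodules --- one uses that $D$ is flat on each side over $A$ and that $D\otimes_AA^e\otimes_AD\cong D^e$ --- and $D$ is a $D^e$-direct summand of $D\otimes_AD$ through the idempotent $e$, so $D$ is perfect over $D^e$.

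It remains to compute $\Ext_{D^e}(D,D\otimes D)$. Being a $D^e$-summand of $D\otimes_AD$, it is enough to compute $\Ext_{D^e}(D\otimes_AD,D\otimes D)$ and then pick out the relevant summand; and the former is handled exactly as in the proof of the theorem: from
  \[
  \hom_{D^e}(D\otimes_AP\otimes_AD,D\otimes D)\cong\hom_{A^e}\bigl(P,\hom_{D^e}(D^e,D\otimes D)\bigr)=\hom_{A^e}(P,D\otimes D)
  \]
and van den Bergh duality for $A$ one obtains the same homology as $P\otimes_{A^e}\bigl(U\otimes_A(D\otimes D)\bigr)[-d]$, which is $D\otimes_AU\otimes_AD$ concentrated in degree $d$ because $U\otimes_A(D\otimes D)$ is a finitely generated projective $A$-bimodule. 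Thus $\Ext^i_{D^e}(D,D\otimes D)=0$ for $i\neq d$, and $\Ext^d_{D^e}(D,D\otimes D)$ is the summand of $D\otimes_AU\otimes_AD$ cut out by the transpose of the idempotent $x\otimes y\mapsto xye$ on $D\otimes_AD$. The hard part will be to identify this summand with $D$ as a $D$-bimodule: using the explicit form of $e$ and, once more, the associativity of $\phi$, inserting the invertible bimodule $U$ and then collapsing one tensorand against $e$ re-sums the powers $U^{\otimes_Ai}$ over all residues modulo $\ell$, the shift by one being absorbed into the reindexing, so that the result is a copy of $D$. This last step is, for $D$, the analogue of the mechanism --- implicit already in the theorem --- by which adjoining the dualizing bimodule $U$ upgrades van den Bergh duality to genuine Calabi-Yau duality, and it is the point I expect to demand the most care.
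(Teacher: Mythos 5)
Your construction of the separability idempotent $e$ and the deduction that $D$ has a good resolution by projective $D$-bimodules (via $D\otimes_AP\otimes_AD$ and the splitting of $\mu$) is exactly the middle portion of the paper's proof, and that part of your argument is correct. Where you diverge is in the computation of $\Ext_{D^e}(D,D\otimes D)$, and there your argument has a genuine gap at precisely the point you flag as ``the hard part.'' Your computation of $\Ext_{D^e}^*(D\otimes_AD,D\otimes D)\cong D\otimes_AU\otimes_AD$ concentrated in degree $d$ is fine, and it does give the vanishing of $\Ext^i_{D^e}(D,D\otimes D)$ for $i\neq d$. But to conclude you must identify the image of the idempotent $\Ext^d(s\mu)$ inside $D\otimes_AU\otimes_AD$ \emph{as a $D$-bimodule} and show it is $D$. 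The idempotent $s\mu$ acts on the first argument of $\Ext$, not on the coefficients, so to compute its effect you must lift it to a $D^e$-linear chain endomorphism of $D\otimes_AP\otimes_AD\cong D^e\otimes_{A^e}P$ --- and $s\mu$ is not of the form $D^e\otimes_{A^e}(\text{something})$, since it is not induced by an $A^e$-endomorphism of $A$ --- and then trace that lift through the adjunction and through van den Bergh duality for $A$, which is only an isomorphism in the derived category. None of this is carried out; the sentence about ``re-summing the powers $U^{\otimes_Ai}$ over all residues modulo $\ell$'' describes the expected answer rather than a proof, and it is exactly the bimodule identification (where associativity of $\phi$ must enter to kill the shift by one) that carries all the content.

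The paper avoids this difficulty entirely by a different device, and you may want to compare. It observes that $\xi=\phi^{-1}(1)$ is central in $B=A[U]$, so that $B\xrightarrow{\cdot(\xi-1)}B$ is a free resolution of $D$ over $B$, giving $\Tor^B_q(D,D)\cong D$ as $D$-bimodules for $q=0,1$ and $0$ otherwise. It then feeds this into the Cartan--Eilenberg change-of-rings spectral sequence $E_2^{p,q}=\Ext^p_{D^e}(\Tor^B_q(D,D),D\otimes D)\Rightarrow\Ext_{B^e}(B,D\otimes D)$, whose abutment is already known \emph{with its bimodule structure} because $B$ is Calabi--Yau of dimension $d+1$ by the Theorem: it is $\Tor^B_{d+1-r}(D,D)$, i.e.\ $D$ in degrees $d$ and $d+1$. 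Since the $E_2$-page has only the two rows $q=0,1$, both equal to $\Ext^p_{D^e}(D,D\otimes D)$, and $\pdim_{D^e}D<\infty$, this forces $\Ext^p_{D^e}(D,D\otimes D)$ to be $D$ in degree $d$ and zero elsewhere. In other words, the paper gets the bimodule identification for free from the already-proved Calabi--Yau property of $B$ (which your argument never uses), whereas your route requires redoing that identification by hand. Your approach is not unreasonable, but as written the decisive step is missing.
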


\begin{proof}
Let $\xi\in U^{\otimes_A\ell}$ be such that $\phi(\xi)=1$; since $\phi$ is
associative, $\xi$ is central in~$B$. The ideal~$J$ is generated by~$\xi-1$
and, if~$\rho:B\to B$ is right multiplication by~$\xi-1$, the complex
  \[ \label{eq:bba}
    B \xrightarrow{\;\rho\;} \underline{B}
  \]
is a resolution of~$D$ as a left $B$-module, and we can use it to compute 
  \[ \label{eq:tor}
    \Tor_p^B(D,D) \cong
      \begin{cases}
        D, & \text{if $p\in\{0,1\}$;} \\
        0, & \text{if $p\geq2$.}
      \end{cases}
  \]
It is immediate that these isomorphisms are of left $D$-modules and by
lifting the multiplication on the right of~$D$ on~$D$ to an endomorphism of
the resolution~\eqref{eq:bba}, we see that they are actually isomorphisms of
$D$-bimodules. 

We write $\xi=\xi_1\otimes\cdots\otimes\xi_\ell$, with
$\xi_1$,~\dots,~$\xi_\ell\in U$ and omitting a sum \`a~la Sweedler, and
consider the element
  \[
    e = \tfrac1\ell
        \Bigl(
          1\otimes1
          +\sum_{r=1}^{\ell-1}
          (\xi_1\otimes\cdots\otimes\xi_r)\otimes(\xi_{r+1}\otimes\cdots\otimes\xi_\ell)
        \Bigr)
        \in D\otimes_AD.
  \]
If $\mu:D\otimes_AD\to D$ is induced by the multiplication of~$D$, then
$\phi(e)=1$ and, because of the associativity of~$\phi$, 
$de=ed$ for all~$d\in D$. It~follows that there is a $D^e$-linear
morphism $s:D\to D\otimes_AD$ such that $s(1)=e$ which splits~$\mu$ and, 
in~particular, that $D$ is a direct summand of the $D^e$-module $D\otimes_AD$;
one says in this situation that $D/A$ is a separable extension
of algebras, as in~\cite{P}*{\S10.8}. If~now $P$ is a
good resolution of~$A$ by projective $A$-bimodules,
then $D\otimes_AP\otimes_AD$ is a good resolution of~$D\otimes_AD$ by
$D$-bimodules and, since $D$ is a direct summand of~$D\otimes_AD$, we see
that $D$ itself has a good resolution by projective $D$-bimodules.

The construction done by Cartan and Eilenberg
in \cite{CE}*{\textsc{XVI}, \S5, Eq.~$(2)_3$} specialized for
the obvious morphism $B^e\to D^e$, together with the natural isomorphism
$\Tor^{B^e}(D^e,B)\cong\Tor^B(D,D)$ of \cite{CE}*{\textsc{IX}, Prop.~4.4},
gives us a change-of-rings spectral sequence with
$E_2^{p,q}=\Ext_{D^e}^p(\Tor^B_q(D,D),D\otimes D)$ converging to
$\Ext_{B^e}(B,D\otimes D)$. Since $B$ is Calabi-Yau of
dimension~$d+1$,
  \[
    \Ext_{B^e}^r(B,D\otimes D)
      \cong\Tor^{B^e}_{d+1-r}(B,D\otimes D)
      \cong\Tor^{B}_{d+1-r}(D,D),
  \]
so that we know the limit of the spectral sequence from~\eqref{eq:tor}.
From~\eqref{eq:tor} we also know that $E_2^{p,q}=0$ if $q\not\in\{0,1\}$,
and that $E_1^{p,0}\cong E_1^{p,1}\cong\Ext_{D^e}^p(D,D\otimes D)$ for
all~$p$. A standard argument with the spectral sequence ---using the fact
that $\pdim_{D^e}D<\infty$--- shows now that $D$ is Calabi-Yau
of dimension~$d$.
\end{proof}

An easy and probably important remark to be made is that the dualizing
bimodule~$U$ for an algebra~$A$ with van den Bergh duality is always
\emph{central}: the actions of the center~$Z(A)$ of~$A$ on the left and on
the right on~$U$ coincide. It~follows from this that when $U$ is of finite
order, so that there is an isomorphism of bimodules $\phi:U^{\otimes_Ae}\to
A$, the fact that $\phi$ be associative or not does not depend on the
particular choice of~$\phi$: it is a property of~$A$. It is natural to
ask:

\begin{Question}
If the dualizing bimodule of an algebra with van den Bergh duality is of
finite order, is it necessarily associative?
\end{Question}

If the algebra is twisted Calabi-Yau, the answer is affirmative.


\begin{bibdiv}
\begin{biblist}

\bib{vdB}{article}{
   author={van den Bergh, Michel},
   title={A relation between Hochschild homology and cohomology for
   Gorenstein rings},
   review={\MR{1443171}},
   partial={
     journal={Proc. Amer. Math. Soc.},
     volume={126},
     date={1998},
     number={5},
     pages={1345--1348},
     issn={0002-9939},
     },
   partial={
     part={Erratum},
     journal={Proc. Amer. Math. Soc.},
     volume={130},
     date={2002},
     number={9},
     pages={2809--2810},
     issn={0002-9939},
     }
}

\bib{CE}{book}{
   author={Cartan, Henri},
   author={Eilenberg, Samuel},
   title={Homological algebra},
   publisher={Princeton University Press},
   place={Princeton, N. J.},
   date={1956},
   pages={xv+390},
   review={\MR{0077480 (17,1040e)}},
}

\bib{G}{article}{
   author={Ginzburg, Victor},
   title={Calabi-Yau algebras},
   note={Preprint available at \href{http://arxiv.org/abs/math/0612139}{\ttfamily arXiv:0612139}},
}

\bib{GK}{article}{
   author={Goodman, John},
   author={Kr\"ahmer, Ulrich},
   title={Untwisting a twisted Calabi-Yau algebra},
   note={Preprint available at \href{http://arxiv.org/abs/1304.0749}{\ttfamily arXiv:1304.0749}},
}

\bib{F}{article}{
   author={Farinati, Marco},
   title={Hochschild duality, localization, and smash products},
   journal={J. Algebra},
   volume={284},
   date={2005},
   number={1},
   pages={415--434},
   issn={0021-8693},
   review={\MR{2115022 (2005j:16009)}},
}

\bib{P}{book}{
   author={Pierce, Richard S.},
   title={Associative algebras},
   series={Graduate Texts in Mathematics},
   volume={88},
   note={Studies in the History of Modern Science, 9},
   publisher={Springer-Verlag},
   place={New York},
   date={1982},
   pages={xii+436},
   isbn={0-387-90693-2},
   review={\MR{674652 (84c:16001)}},
}

\end{biblist}
\end{bibdiv}

\end{document}